\newtheorem{lemma}{Lemma}[section]
\newtheorem{proposition}[lemma]{Proposition}
\newtheorem{Lem}[lemma]{Lemma}
\newtheorem{Prop}[lemma]{Proposition}
\newtheorem{Thm}[lemma]{Theorem}
\theoremstyle{definition}
\newtheorem{example}[lemma]{Example}
\newtheorem{Def}[lemma]{Definition}
\theoremstyle{remark}
\numberwithin{equation}{section}
\newtheorem*{Rmk}{Remark}
\def\bmat{\begin{pmatrix}}
\def\emat{\end{pmatrix}}
\def\-{\smallsetminus}
\def\~{\widetilde}
\def\ol{\overline}
\def\ge{\geqslant}
\def\le{\leqslant}
\def\phi{\varphi}
\def\deg{\text{deg }}
\def\<{\langle}
\def\>{\rangle}
\def\Gr{\operatorname {Gr}}
\def\GrRep{\operatorname {GrRep}}
\def\Fdim{\operatorname {Fdim}}
\def\QGr{\operatorname {QGr}}
\def\id{\operatorname {id}}
\def\max{\operatorname {max}}
\def\Mod{\operatorname {Mod}}
\def\Ker{\operatorname {Ker}}
\def\Coker{\operatorname {Coker}}
\def\11{\text{\bf 1}}
\def\NN{{\mathbb N}}
\def\ZZ{{\mathbb Z}}
\def\sM{{\mathscr M}}
\title[Weighted path algebras]{Category equivalences involving graded modules over weighted path algebras and weighted monomial algebras}
\begin{document}

\author{Cody Holdaway and Gautam Sisodia}

\address{Department of Mathematics, Box 354350, Univ.
Washington, Seattle, WA 98195}

\email{codyh3@math.washington.edu, gautas@math.washington.edu}

\keywords{quotient category; representations of quivers; path algebras; monomial algebras; Ufnarovskii graph.}

\subjclass[2010]{14A22, 16B50, 16G20, 16W50}

\begin{abstract}
Let $k$ be a field, $Q$ a finite directed graph, and $kQ$ its path algebra. Make $kQ$ an $\NN$-graded algebra by assigning each arrow a positive degree. Let $I$ be an ideal in $kQ$ generated by a finite number of paths
and write $A=kQ/I$. Let $\QGr A$ denote the quotient of the category of graded right $A$-modules modulo the 
Serre subcategory consisting of those graded modules that are the sum of their finite dimensional 
submodules. This paper shows there is a finite directed graph $Q'$ with all its arrows
placed in degree 1 and an equivalence of categories $\QGr A \equiv \QGr kQ'$.  
A result of Smith now implies that $\QGr A \equiv \Mod S$, the category of right modules over an 
ultramatricial, hence von Neumann regular, algebra $S$.
\end{abstract}

\maketitle

\pagenumbering{arabic}

\setcounter{section}{0}

\section{}

\subsection{}
Let $k$ be a field and $A$ an $\NN$-graded $k$-algebra. Let $\Gr A$ be the category with objects the $\ZZ$-graded right $A$-modules and morphisms the degree preserving graded $A$-module homomorphisms. Let $\Fdim A \subseteq \Gr A$ be the localizing subcategory of modules that are the sum of their finite-dimensional submodules. Let $\QGr A$ denote the quotient of $\Gr A$ by $\Fdim A$ and 
$$
\pi^*:\Gr A\to \QGr A
$$
the canonical quotient functor. As $\Fdim A$ is localizing, $\pi^*$ has a right adjoint which we will denote by $\pi_*$. 

For $M \in \Gr A$, let $M(1) \in \Gr A$ be $M$ as a right module with grading given by $M(1)_i:=M_{i+1}$. We call $M(1)$ the {\it shift} of $M$. Shifting determines an auto-equivalence $(1):\Gr A\to \Gr A$. The shift functor descends to an auto-equivalence on $\QGr A$ which we still denote by $(1)$.

\subsection{Question}
\label{qu1}
Let $F=k\langle x_1,\ldots,x_n\rangle$  be the free algebra endowed with an $\NN$-grading 
induced by fixing $\deg(x_i) \ge 1$ for all $i$. What does $\QGr F$ look like? 

The answer when $\deg(x_i)=1$ for all $i$ is in \cite{Sm0}.

\subsection{}
This paper shows that $\QGr F \equiv \QGr kQ$
where $kQ$ is the path algebra of a finite quiver $Q$, graded by giving each arrow degree 1, and defined as follows: view $F$ as the path algebra of the weighted quiver with one vertex and $n$ loops of degrees 
$\deg(x_i)$. If $\deg(x_i)>1$, replace the loop for $x_i$ by $\deg(x_i)-1$ vertices and a cycle through them 
consisting of $\deg(x_i)$ arrows each of degree 1. The
answer to question \ref{qu1}, combined with a result in \cite{Sm1},
says that $\QGr F \equiv \Mod S$ where $S$ is an ultramatricial, hence von Neumann regular, algebra.

The question this paper answers is a little more general.  

\subsection{}

Consider the categories $\QGr A$ where $A$ is a finitely presented $\NN$-graded $k$-algebra belonging to one of the 
following five classes:
\begin{enumerate}
\item[]
\begin{enumerate}
  \item[{\bf PA1:}]
  Path algebras of finite quivers with grading induced by declaring that all arrows have degree 1; this implies that the degree of a path is equal to its length.
  \item[{\bf WPA:}] 
  Weighted path algebras of finite quivers---this is a path algebra with grading given by assigning each arrow a degree $\ge 1$.
  \item 
  [{\bf MA:}]
  Monomial algebras: these are algebras of the form $kQ/I$ where $kQ$ is a weighted path algebra of a finite quiver and $I$ is an
  ideal generated by a finite set of paths.
  \item[{\bf CMA:}]
  Connected monomial algebras: these are monomial algebras $kQ/I$ in which $Q$ has only one vertex.
  \item
  [{\bf CMA1:}]
  Connected monomial algebras that are generated by elements of degree 1. 
\end{enumerate}
\end{enumerate}

The following diagram depicts the inclusions between the five classes: each class is contained in the class ``above'' it.
\[
\UseComputerModernTips
\xymatrix{ 
&  &*+[F]{\txt{{\bf MA}}}
 \ar@{-}[dl]  \ar@{-}[dr] 
 \\ 
&*+[F]{\txt{{\bf WPA}} }  \ar@{-}[d] 
&&*+[F]{\txt{{\bf CMA}}}  \ar@{-}[d] 
\\
& *+[F]{\txt{{\bf PA1}}} && *+[F]{\txt{{\bf CMA1}} }
}
\]

\begin{Thm}
\label{thm.main}
If ${\bf C}$ and ${\bf C'}$ are two of the five classes above and $A$ belongs to ${\bf C}$, then there is an algebra $A'$ in ${\bf C'}$ and an equivalence $F:\QGr A \to \QGr A'$ such that $F(M(1))\cong F(M)(1)$ for all $M\in \QGr A$.  
\end{Thm}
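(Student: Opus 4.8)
The plan is to deduce the theorem from two special cases together with two ``$\QGr$ is insensitive to finite-dimensional data'' lemmas. Since each of the five classes is contained in $\mathbf{MA}$, and since $\mathbf{PA1}\subseteq\mathbf{WPA}\subseteq\mathbf{MA}$ and $\mathbf{CMA1}\subseteq\mathbf{CMA}\subseteq\mathbf{MA}$, it is enough to prove: ($\alpha$) for each $A\in\mathbf{MA}$ there is $A'\in\mathbf{PA1}$ with a shift-compatible equivalence $\QGr A\equiv\QGr A'$; and ($\beta$) for each $A\in\mathbf{PA1}$ there is $A'\in\mathbf{CMA1}$ with a shift-compatible equivalence $\QGr A\equiv\QGr A'$. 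Granting these, and using that shift-compatible equivalences compose, an arbitrary ordered pair $(\mathbf C,\mathbf C')$ with $A\in\mathbf C$ is handled by viewing $A$ in $\mathbf{MA}$, applying ($\alpha$) and then, if $\mathbf C'\in\{\mathbf{CMA},\mathbf{CMA1}\}$, ($\beta$), and finally reading the output inside $\mathbf C'$ via one of the above inclusions (when $\mathbf C'=\mathbf{MA}$ one simply keeps $A$).

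The two lemmas I would establish first are: Lemma (1), that if $A$ is an $\NN$-graded, locally finite $k$-algebra and $e\in A_0$ an idempotent with $A/AeA$ finite dimensional, then $-\otimes_A Ae$ and $-\otimes_{eAe}eA$ descend to mutually inverse equivalences $\QGr A\equiv\QGr eAe$ commuting with the shift; and Lemma (2), that if $\varphi\colon A\to B$ is a homomorphism of $\NN$-graded, locally finite $k$-algebras which is bijective in all sufficiently high degrees and makes $B$ a finitely generated left and right $A$-module, then restriction along $\varphi$ and $-\otimes_A B$ descend to a shift-compatible equivalence $\QGr A\equiv\QGr B$. In both cases the proof is the standard one: finite generation forces the functors to preserve $\Fdim$, and because the discrepancy between $A$ and $eAe$ (respectively between $A$ and $B$) lives in boundedly many degrees, the unit and counit of the adjunction have finite-dimensional kernel and cokernel, hence become isomorphisms in the quotient categories.

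With these in hand, ($\alpha$) is achieved in two steps. \emph{Unweighting:} given $A=kQ/I$ with $I$ generated by finitely many paths, replace each arrow of degree $d>1$ by a path of $d$ degree-one arrows through $d-1$ new vertices (the same subdivision recalled in the introduction for the free algebra), and let $B=kQ'/I'$ with $I'$ generated by the subdivided paths; then $B\in\mathbf{MA}$ has all arrows in degree $1$ and finitely many relations of bounded length. Taking $e\in B_0$ to be the sum of the idempotents at the original vertices, every path of $Q'$ between original vertices is a concatenation of complete subdivision chains, so $eBe\cong A$ as graded algebras, whereas $B/BeB$ is spanned by paths lying strictly inside a single chain and is finite dimensional; Lemma (1) gives $\QGr A\equiv\QGr B$. \emph{Removing relations:} for such a $B$, say with all relations of length $\le m$, form the Ufnarovskii graph $\Gamma$ whose vertices are the $I'$-reduced paths of length $m-1$ and whose arrows record the length-one extensions that remain $I'$-reduced, so that concatenation gives a source- and target-preserving bijection between length-$\ell$ paths of $\Gamma$ and $I'$-reduced length-$(m-1+\ell)$ paths of $Q'$. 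From this correspondence one builds a shift-compatible equivalence $\QGr B\equiv\QGr k\Gamma$ --- for instance by realizing an appropriately shifted truncation of $B$ and the algebra $k\Gamma$ as idempotent corners, each with finite-dimensional complementary ideal, of a single $\ZZ$-graded algebra, and invoking Lemma (1) twice. As $k\Gamma\in\mathbf{PA1}$, this proves ($\alpha$). For ($\beta$): given $k\Gamma\in\mathbf{PA1}$ with arrow set $S$, put $A'=k\langle S\rangle/J$ with $J$ generated by all products $st$ for which the head of $s$ differs from the tail of $t$; then $A'\in\mathbf{CMA1}$, and $s\mapsto s$ together with $1\mapsto\sum(\text{vertex idempotents of }\Gamma)$ defines a homomorphism $A'\to k\Gamma$ that is bijective in every positive degree, so Lemma (2) yields $\QGr A'\equiv\QGr k\Gamma$.

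The main obstacle is the relation-removing step. Beyond the bookkeeping around the Ufnarovskii graph (overlapping forbidden paths, relation-generators of unequal length, the shift by $m-1$), the real point is that $B$ and $k\Gamma$ are genuinely non-isomorphic algebras that are not visibly corners of one another --- their categories of graded modules agree only ``at infinity.'' Turning that agreement into an honest equivalence of quotient categories, producing the functors, checking that they are quasi-inverse modulo $\Fdim$, and verifying shift-equivariance throughout, is where essentially all the content of the theorem lies; once Lemmas (1) and (2) are available, unweighting and the collapse to one vertex are routine.
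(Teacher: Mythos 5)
Your reduction to the two statements ($\alpha$) and ($\beta$) is sound, and your unweighting step is a genuinely different idea from the paper's: the paper never subdivides arrows in the presence of relations, but instead first removes relations (passing from a connected monomial algebra to its \emph{weighted} Ufnarovskii graph, Theorem \ref{thm.main2'}) and only then subdivides arrows one at a time in a relation-free path algebra, via an explicit adjoint pair on graded representations whose counit is an isomorphism modulo torsion (Theorems \ref{thm.1stepequiv} and \ref{thm.wpapa1}). Your idempotent-corner formulation of unweighting ($eBe\cong A$ with $B/BeB$ finite dimensional) is attractive and the structural claims about the subdivided quiver are correct, but your Lemma (1) is asserted rather than proved, and its justification is off in one respect: the kernel and cokernel of the counit $M\otimes_A Ae\otimes_{eAe}eA\to M$ are torsion (annihilated by $A_{\ge N}$ since $A_{\ge N}\subseteq AeA$), not finite dimensional in general; moreover, showing that $(-)e$ carries $\Fdim A$ \emph{onto} $\Fdim eAe$ --- needed so that the recollement $\Gr A/\{M:Me=0\}\equiv\Gr eAe$ descends to $\QGr A\equiv\QGr eAe$ rather than to a quotient of $\Gr eAe$ by a possibly smaller localizing subcategory --- requires an argument you have not supplied.

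The genuine gap, however, is the relation-removing step, which you yourself identify as carrying ``essentially all the content of the theorem'' and then do not carry out. The equivalence $\QGr B\equiv\QGr k\Gamma$ between a degree-one monomial algebra and the path algebra of its Ufnarovskii graph is not obtained in this paper (or in \cite{HS0}) by exhibiting $B$ and $k\Gamma$ as idempotent corners of a common graded algebra --- no such common algebra is in evidence, and your Lemma (1) gives no purchase on it. It is obtained by constructing a graded algebra homomorphism $f:B\to k\Gamma$ sending each generator to the sum of the arrows carrying its label, proving that $\Ker f$ and $\Coker f$ lie in $\Fdim$, and invoking \cite[Prop 2.5]{AZ}; this is where the combinatorics of legal words actually enters. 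Without that construction --- or a citation of \cite[Thm.\ 1.1]{HS0}, which you could legitimately reach by first replacing your multi-vertex $B$ by the connected algebra $k+B_{\ge 1}$ --- statement ($\alpha$) is unproved and the argument does not close.
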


We introduce the shorthand $\QGr({\bf C}) \subset \QGr({\bf C'})$ for the result in Theorem \ref{thm.main}.
It is obvious that $\QGr({\bf C}) \subset \QGr({\bf C'})$ if ${\bf C} \subset {\bf C'}$. We depict these obvious inclusions by the diagram
\[
\UseComputerModernTips
\xymatrix{ 
  &*+[F]{\txt{$\QGr({\bf MA})$}} 
 \ar@{-}[dl]  \ar@{-}[dr] 
 \\ 
 *+[F]{\txt{$\QGr({\bf WPA})$} }   \ar[ur]   
&&*+[F]{\txt{$\QGr({\bf CMA})$}}  \ar[ul] 
\\
*+[F]{\txt{$\QGr({\bf PA1})$}}  \ar[u] &&*+[F]{\txt{$\QGr({\bf CMA1})$} } \ar[u] 
}
\]

\subsubsection{Why these five classes}
Free algebras are universal objects in the category of $k$-algebras. Polynomial rings are
universal objects in the category of commutative $k$-algebras.

Why single out monomial algebras? Monomial algebras can often be understood through the combinatorics of words. When dealing with families of algebras presented by generators and relations, monomial relations tend
to turn up as singular points. Monomial relations are surely the simplest relations.

Monomial algebras tend to have infinite global dimension so are less amenable to homological arguments. 
Path algebras always have global dimension $\le 1$ so it is reassuring to know that although $\Gr A$ is homologically awkward, $\QGr A$ is not. 

There seems to be a general consensus that many technicalities can be avoided by assuming the algebra is generated by elements of degree 1.

\subsection{}
\label{ssect.AZ+HS}
Some of the inclusions $\QGr({\bf C})\subset \QGr({\bf C'})$ follow from results already in the literature.

Suppose $A \in {\bf MA}$. Then $A'= k+A_{\ge 1} \in {\bf CMA}$ and  $\dim_k(A/A')<\infty$. So, by \cite[Prop 2.5]{AZ}, $-\otimes_{A'} A$ induces an equivalence $F:\QGr A' \equiv \QGr A$ such that $F(M(1))\cong F(M)(1)$ for all $M\in \QGr A$. Thus 
$$
\QGr({\bf MA})  = \QGr({\bf CMA}),
$$
where the ``$=$'' means $\QGr({\bf MA})\subset \QGr({\bf CMA})$ and $\QGr({\bf CMA})\subset \QGr({\bf MA})$.

Similarly, if $A \in {\bf PA1}$, then $A':= k+A_{\ge 1} \in {\bf CMA1}$ and, by  \cite[Prop 2.5]{AZ}, $\QGr({\bf PA1})  \subset \QGr({\bf CMA1})$. The reverse inclusion  $\QGr({\bf CMA1}) \subset \QGr({\bf PA1})$ holds by \cite[Thm. 1.1]{HS0}.

We depict these results by the diagram
\[
\UseComputerModernTips
\xymatrix{ 
  &*+[F]{\txt{$\QGr({\bf MA})$}} \ar@{=}[dr] 
 \\ 
 *+[F]{\txt{$\QGr({\bf WPA})$} }   \ar[ur]  & &*+[F]{\txt{$\QGr({\bf CMA})$}} 
& 
\\
*+[F]{\txt{$\QGr({\bf PA1})$}}  \ar[u] &&*+[F]{\txt{$\QGr({\bf CMA1})$} } \ar[u]  \ar@{=}[ll]
}
\]

\subsection{}
The new contributions in this paper are Theorem \ref{thm.main2'}, which shows that $\QGr({\bf CMA})  \subset  \QGr({\bf WPA})$,
and Theorem \ref{thm.wpapa1},  which shows that $\QGr({\bf WPA}) \, \subset  \, \QGr({\bf PA1}).$
These two results tell us that  
\[
\UseComputerModernTips
\xymatrix{ 
  &*+[F]{\txt{$\QGr({\bf MA})$}} \ar@{=}[dr] 
 \\ 
 *+[F]{\txt{$\QGr({\bf WPA})$} }   \ar@{=}[ur]  \ar@{=}[rr] & &*+[F]{\txt{$\QGr({\bf CMA})$}} 
& 
\\
*+[F]{\txt{$\QGr({\bf PA1})$}}  \ar@{=}[u] &&*+[F]{\txt{$\QGr({\bf CMA1})$} } \ar@{=}[u]  \ar@{=}[ll]
}
\]
and the proof of Theorem \ref{thm.main} is then complete.

\subsection{}
 The proof that $\QGr({\bf CMA})  \subset  \QGr({\bf WPA})$ is similar to the proof 
of \cite[Thm. 1.1]{HS0}. One associates to $A$ in {\bf CMA} a weighted quiver $Q=Q(A)$, the weighted Ufnarovskii 
graph of $A$,
and shows there is a homomorphism of graded algebras $A \to kQ$ whose kernel and cokernel belong to $\Fdim(A)$.  

\subsection{Acknowledgments.}

The authors would like to express their gratitude to S. Paul Smith for reading an early version of this manuscript and providing helpful comments. 

\section{Background, notation, and terminology}
\label{sect.2}      

\subsection{}

Let $Q$ be a finite quiver i.e. a finite directed graph. We denote by $Q_0$, $Q_1$, $s$ and $t$ the vertex set, arrow set, source function and target function of $Q$, respectively.

\begin{Def}
A {\it weighted path algebra} $kQ$ of $Q$ is the path algebra of $Q$ graded by assigning each arrow a degree $\geq 1$. We denote the degree of an arrow $a \in Q_1$ by $\deg(a)$. Define the {\it weight discrepancy} of $kQ$ to be
$$
D(kQ) := \sum_{a \in Q_1} \deg(a) - |Q_1|.
$$ 
\end{Def}

\begin{Rmk}
For $kQ$ a weighted path algebra, $D(kQ) \geq 0$, with equality if and only if all arrows have degree 1.
\end{Rmk}

\begin{Def}
Let $V$ be a $\ZZ$-graded $k$-vector space and $i \in \ZZ$. Define the $\ZZ$-graded $k$-vector space $V(i)$ to be $V$ with grading $V(i)_j = V_{i+j}$ for all $j \in \ZZ$.
\end{Def}

\begin{Def}
Let $V$ and $W$ be $\ZZ$-graded $k$-vector spaces and $i \in \ZZ$. A {\it linear map $f : V \to W$ of degree $i$} is a $k$-linear map $f : V \to W$ such that $f(V_j) \subseteq W_{j + i}$ for all $j \in \ZZ$ (in the case $i = 0$, we say $f$ is {\it degree-preserving}).
\end{Def}

\subsection{}
Let $kQ$ be a weighted path algebra. Recall $\GrRep kQ$, the category of graded representations of $Q$. A graded representation $M= (M_v, M_a)$ of $Q$ is 
\begin{itemize}
\item a graded vector space $M_v$ for each vertex $v$ and
\item a linear map $M_a:M_{s(a)}\to M_{t(a)}$ of degree equal to $\deg(a)$ for each arrow $a$.
\end{itemize}
A morphism $\phi:M\to N$ between two graded representations of $Q$ consists of a degree-preserving linear map $\phi_v:M_v\to N_v$ for each vertex $v$ such that 
$$
\UseComputerModernTips
\xymatrix{ {M_{s(a)}}\ar[r]^{M_a} \ar[d]_{\phi_{s(a)}} & {M_{t(a)}} \ar[d]^{\phi_{t(a)}} \\
           {N_{s(a)}}\ar[r]_{N_a} & {N_{t(a)}} }
$$
commutes for each arrow $a$. There is an equivalence of categories $\Gr kQ\equiv \GrRep kQ$ given by sending a graded module $M$ to the graded representation $(Me_v,M_a)$, where $M_a:Me_{s(a)}\to Me_{t(a)}$ is the linear map determined by the action of the arrow $a$. From now on we identify these two categories.

\section{Proof that $\QGr({\bf WPA}) \, \subset  \, \QGr({\bf PA1})$}      
\label{sect.3}

\subsection{}\label{subsect.3.1}

Let $kQ$ be a weighted path algebra. Suppose $b$ is an arrow in $Q$ with $\deg(b) > 1$. Define a new quiver $Q'$ by declaring 
\begin{align*}
Q'_0 &:= Q_0\sqcup \{z\} \\
Q'_1 &:= (Q_1 - \{b\}) \sqcup \{b' : s(b) \to z, b'' : z \to t(b)\}.
\end{align*}
We define a grading on $kQ'$ by declaring the degree of an arrow $a\in Q'_1 - \{b',b''\}$ to be the degree of $a$ in $kQ$, while $\deg (b')=1$ and $\deg(b'') = \deg(b)-1$. Since $|Q_1'| = |Q_1| + 1$, $D(kQ') = D(kQ)-1$.

\begin{example}
Let $Q$ be the quiver
$$
\UseComputerModernTips
\xymatrix{ {\bullet}\ar@(ul,dl)_{a} \ar@(ur,dr)^{b}}
$$
with $\deg(b) > 1$. The quiver $Q'$ obtained by replacing the arrow $b$ as described above is
$$
\UseComputerModernTips
\xymatrix{ {\bullet} \ar@(ul,dl)_{a} \ar@/^1pc/[r]^{b'} & z \ar@/^1pc/[l]^{b''} }
$$

with $\deg(b')=1$ and $\deg(b'') =\deg(b)-1$.
\end{example}

\begin{example}
Let $Q$ be the quiver
$$
\UseComputerModernTips
\xymatrix{ {\bullet} \ar@(ul,dl) \ar@/^1pc/[r]^{b} \ar@/_1pc/[r] & {\bullet}\ar@(ur,dr)  }
$$
with $\deg(b) > 1$. Replacing $b$ gives the quiver
$$
\UseComputerModernTips
\xymatrix{ {\bullet}\ar@(ul,dl) \ar@/^1pc/[r]^{b'} \ar@/_1pc/[rr] & z \ar@/^1pc/[r]^{b''} & {\bullet}\ar@(ur,dr)}
$$
with $\deg(b') = 1$ and $\deg(b'') = \deg(b) - 1$.
\end{example}

\subsection{An adjoint pair}

Let $Q$ and $Q'$ be quivers related as in section \ref{subsect.3.1}. We define a functor $F : \Gr kQ \to \Gr kQ'$ as follows. For $M \in \Gr kQ$, let $F(M) \in \Gr kQ'$ be the representation given by
\begin{itemize}
\item  $F(M)_v:=M_v$ for all $v\in Q'_0 - \{z\}$,
\item $F(M)_z:=M_{s(b)}(-1)$,
\end{itemize}
and for the arrows
\begin{itemize}
\item $F(M)_a:=M_a$ for all $a\in Q'_1 - \{b',b''\}$,
\item $F(M)_{b'} = \id:M_{s(b)}\to M_{s(b)}(-1)$ considered a degree one linear map, and
\item $F(M)_{b''} = M_b : M_{s(b)}(-1)\to M_{t(b)}$ considered a degree $\deg(b)-1$ linear map.
\end{itemize}

Let $\phi:M\to N$ be a morphism in $\Gr kQ$. Define $F(\phi):F(M)\to F(N)$ to be 
\begin{itemize}
\item $F(\phi)_v:=\phi_v$ for all $v\in Q'_0 - \{z\}$ and
\item $F(\phi)_z:=\phi_{s(b)}(-1) : M_{s(b)}(-1) \to N_{s(b)}(-1)$.
\end{itemize}

Since the diagram
$$
\UseComputerModernTips
\xymatrix{ {M_{s(b)}}\ar[d]_{\phi_{s(b)}} \ar[r]^(0.4){\id} & {M_{s(b)}(-1)}\ar[r]^(0.55){M_b} \ar[d]^{\phi_{s(b)}(-1)} & {M_{t(b)}} \ar[d]^{\phi_{t(b)}} \\
           {N_{s(b)}}\ar[r]_(0.45){\id} & {N_{s(b)}(-1)} \ar[r]_(0.55){N_b} & {N_{t(b)}} }
$$
commutes and 
$$
\UseComputerModernTips
\xymatrix{ {M_{s(a)}}\ar[r]^{M_a} \ar[d]_{\phi_{s(a)}} & {M_{t(a)}}\ar[d]^{\phi_{t(a)}} \\
           {N_{s(a)}}\ar[r]_{N_a} & {N_{t(a)}} }           
$$
commutes for all $a \in Q_1' - \{b',b''\}$, $F(\phi)$ is a morphism in $\Gr kQ'$. From the construction, $F(\id_M)=\id_{F(M)}$ and $F(\phi\circ \psi)=F(\phi)\circ F(\psi)$ for any pair of composable morphisms. Hence, we get a functor
$$
F:\Gr kQ \to \Gr kQ'.
$$
Since $F$ preserves kernels and cokernels it is an exact functor. Moreover, $F$ respects shifting, i.e. $F(M(1))\cong F(M)(1)$ for all $M\in \Gr kQ$.

Define a functor $G: \Gr kQ' \to \Gr kQ$ as follows. For $N \in \Gr kQ'$, let $G(N) \in \Gr kQ$ be the representation given by
\begin{itemize}
\item $G(N)_v:=N_v$ for all $v\in Q_0=Q'_0-\{z\}$,
\end{itemize}
and for the arrows
\begin{itemize}
\item $G(N)_a:=N_a$ for all $a\in Q_1-\{b\}$, and
\item $G(N)_b:=N_{b''}\circ N_{b'}$ (a linear map of degree equal to $\deg(b)$).
\end{itemize} 

For $\psi:N\to P$ a morphism in $\Gr kQ'$, define $G(\psi):G(N)\to G(P)$ to be $G(\psi)_v:=\psi_v$ for all $v\in Q_0$, which is a morphism in $\Gr kQ$. Since $G(\id_N)=\id_{G(N)}$ and $G(\phi \circ \psi)=G(\phi)\circ G(\psi)$, we have a functor $G:\Gr kQ' \to \Gr kQ$.

It is evident from the definitions that $G\circ F=\id_{\Gr kQ}$. 

Let $N \in \Gr kQ'$. The module $FG(N)$ is given by the data
\begin{itemize}
\item $FG(N)_v=N_v$ for $v\neq z$, 
\item $FG(N)_z=N_{s(b)}(-1)$,
\item $FG(N)_a=N_a$ for $a\in Q'_1-\{b',b''\}$, 
\item $FG(N)_{b'} = \id : N_{s(b)}\to N_{s(b)}(-1)$ considered a degree one linear map, and
\item $FG(N)_{b''} = N_{b''}\circ N_{b'} : N_{s(b)}(-1)\to N_{t(b)}$ considered a degree $\deg(b) - 1$ linear map.   
\end{itemize}

For each $N \in \Gr kQ'$, define $\epsilon_N:FG(N)\to N$ as follows:
\begin{itemize}
\item $(\epsilon_N)_v = \id : FG(N)_v=N_v\to N_v$ for $v\in Q'_0-\{z\}$, and
\item $(\epsilon_N)_z = N_{b'}$ considered a degree zero linear map  from $FG(N)_z=N_{s(b)}(-1)$ to $N_z$.
\end{itemize}

\begin{Prop}
The correspondence $N \mapsto \epsilon_N$ is a natural transformation $\epsilon : FG \to \id_{\Gr kQ'}$.
\end{Prop}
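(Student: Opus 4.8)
The plan is to verify directly that $\epsilon$ satisfies the two conditions defining a natural transformation: first, that each $\epsilon_N : FG(N) \to N$ is a genuine morphism in $\Gr kQ'$, and second, that the naturality squares commute for every morphism $\psi : N \to P$ in $\Gr kQ'$. Both checks are short because the functors $F$ and $G$ act as the identity away from the vertex $z$ and the arrows $b', b''$.

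\textbf{Step 1: $\epsilon_N$ is a morphism.} I must check that the square
$$
\UseComputerModernTips
\xymatrix{ {FG(N)_{s(a)}}\ar[r]^{FG(N)_a} \ar[d]_{(\epsilon_N)_{s(a)}} & {FG(N)_{t(a)}} \ar[d]^{(\epsilon_N)_{t(a)}} \\
           {N_{s(a)}}\ar[r]_{N_a} & {N_{t(a)}} }
$$
commutes for every arrow $a \in Q'_1$. For $a \notin \{b', b''\}$ this is trivial: all four maps are identities or equal to the corresponding maps of $N$, so the square is $N_a = N_a$. The only content is at $a = b'$ and $a = b''$. For $a = b'$: the top map is $\id : N_{s(b)} \to N_{s(b)}(-1)$, the right vertical is $(\epsilon_N)_z = N_{b'}$, the left vertical is $\id$, and the bottom is $N_{b'}$; commutativity says $N_{b'} \circ \id = N_{b'} \circ \id$. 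For $a = b''$: the top map is $FG(N)_{b''} = N_{b''} \circ N_{b'}$, the left vertical is $(\epsilon_N)_z = N_{b'}$, the right vertical is $\id$, the bottom is $N_{b''}$; commutativity says $N_{b''} \circ N_{b'} = \id \circ (N_{b''} \circ N_{b'})$. Both hold. I should also note each component has the correct degree: $(\epsilon_N)_z = N_{b'}$ goes from $N_{s(b)}(-1)$ to $N_z$, which is degree-preserving precisely because $N_{b'}$ has degree $1 = \deg(b')$ as a map $N_{s(b)} \to N_z$.

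\textbf{Step 2: naturality.} Given $\psi : N \to P$ in $\Gr kQ'$, I must show $\epsilon_P \circ FG(\psi) = \psi \circ \epsilon_N$, componentwise over $Q'_0$. For $v \neq z$ both sides equal $\psi_v$ since $(\epsilon_N)_v = (\epsilon_P)_v = \id$ and $FG(\psi)_v = \psi_v$. At $v = z$: the left side is $(\epsilon_P)_z \circ FG(\psi)_z = P_{b'} \circ \psi_{s(b)}(-1)$, and the right side is $\psi_z \circ (\epsilon_N)_z = \psi_z \circ N_{b'}$. These agree because $\psi$ is a morphism of representations of $Q'$, so the square for the arrow $b'$, namely $\psi_z \circ N_{b'} = P_{b'} \circ \psi_{s(b)}$, commutes; shifting this identity by $(-1)$ gives exactly what is needed. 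This completes the proof. There is no serious obstacle here — the only thing to be careful about is bookkeeping of the degree shifts $(-1)$ attached to the vertex $z$, and making sure the identity maps are read with the correct source and target gradings; everything else is a restatement of the fact that $\psi$ and the structure maps of $N$ are already compatible.
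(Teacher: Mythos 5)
Your proof is correct and follows essentially the same route as the paper's: verify arrow-by-arrow that $\epsilon_N$ is a morphism of graded representations (with the only content at $b'$ and $b''$), then check naturality componentwise, reducing the $z$-component to the commuting square for $b'$ in the definition of $\psi$. Your additional remark that $(\epsilon_N)_z=N_{b'}$ is degree-preserving as a map $N_{s(b)}(-1)\to N_z$ precisely because $\deg(b')=1$ is a worthwhile point the paper leaves implicit.
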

\begin{proof}
First we show that $\epsilon_N$ is a morphism for each $N \in \Gr kQ'$. For $a\in Q'_1-\{b',b''\}$, $s(a)\neq z$ and $t(a)\neq z$ and the diagram
$$
\UseComputerModernTips
\xymatrix@C=60pt{ {FG(N)_{s(a)}}\ar[d]_{(\epsilon_N)_{s(a)}} \ar[r]^{FG(N)_a} & {FG(N)_{t(a)}} \ar[d]^{(\epsilon_N)_{t(a)}} \\
           {N_{s(a)}}\ar[r]_{N_a} & {N_{t(a)}} }
$$
commutes as $FG(N)_{s(a)}=N_{s(a)}$, $FG(N)_{t(a)}=N_{t(a)}$, the maps $(\epsilon_N)_{s(a)}$ and $(\epsilon_N)_{t(a)}$ are identities, and $FG(N)_a=N_a$. Consider the diagrams
$$
\UseComputerModernTips
\xymatrix@C=60pt{ {FG(N)_{s(b')}}\ar[d]_{(\epsilon_N)_{s(b')}} \ar[r]^{FG(N)_{b'}} & {FG(N)_z}\ar[d]^{(\epsilon_N)_{z}} \\
           {N_{s(b')}}\ar[r]_{N_{b'}} & {N_z}}
$$
and
$$
\UseComputerModernTips
\xymatrix@C=60pt{{FG(N)_z}\ar[d]_{(\epsilon_N)_{z}} \ar[r]^(0.45){FG(N)_{b''}} & {FG(N)_{t(b'')}}\ar[d]^{(\epsilon_N)_{t(b'')}} \\
           {N_z}\ar[r]_{N_{b''}} & {N_{t(b'')}.}} 
$$
The first diagram commutes since $(\epsilon_N)_{s(b')}=\id$, $(\epsilon_N)_z=N_{b'}$ and $FG(N)_{b'}=\id$. The second diagram commutes because $(\epsilon_N)_z=N_{b'}$, $(\epsilon_N)_{t(b'')}=\id$, and $FG(N)_{b''}=N_{b''}\circ N_{b'}$. Hence $\epsilon_N:FG(N)\to N$ is a morphism in $\Gr kQ'$.

Let $\psi:M\to N$ be a morphism in $\Gr kQ'$ and consider the diagram
\begin{eqnarray} \label{diagram.epsnat}
\xymatrix{ {FG(M)}\ar[r]^(0.6){\epsilon_M} \ar[d]_{FG(\psi)} & {M}\ar[d]^{\psi} \\
           {FG(N)}\ar[r]_(0.6){\epsilon_N} & {N}. }
\end{eqnarray}
If $v\in Q'_0-\{z\}$, then
$$
(\epsilon_N)_v\circ FG(\psi)_v=(\epsilon_N)_v\circ \psi_v=\psi_v=\psi_v\circ (\epsilon_M)_v
$$
because $FG(\psi)_v=G(\psi)_v=\psi_v$ and $(\epsilon_N)_v$ and $(\epsilon_M)_v$ are identities. Since $\psi$ is a morphism from $M$ to $N$, $\psi_z\circ M_{b'}=N_{b'}\circ \psi_{s(b')}(-1)$ i.e. $(\epsilon_N)_z\circ FG(\psi)_z=\psi_z\circ (\epsilon_M)_z$. Hence, diagram (\ref{diagram.epsnat}) commutes and $\epsilon:FG\to \id_{\Gr kQ'}$ is a natural transformation.
\end{proof}

\begin{Prop}
The functor $F$ is left adjoint to $G$.
\end{Prop}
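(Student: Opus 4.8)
The plan is to exhibit the unit and counit of the adjunction and then check the two triangle identities, exploiting the fact---already noted---that $G\circ F=\id_{\Gr kQ}$ \emph{on the nose}. Because of this, the natural candidate for the unit is the identity natural transformation $\eta:=\id:\id_{\Gr kQ}\to GF$, and for the counit we take the natural transformation $\epsilon:FG\to\id_{\Gr kQ'}$ constructed in the previous proposition. It then remains to verify that $(\epsilon F)\circ(F\eta)=\id_F$ and $(G\epsilon)\circ(\eta G)=\id_G$.

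Since $\eta$ is the identity and $GF=\id_{\Gr kQ}$ literally, both triangle composites degenerate. The first reduces to the assertion $\epsilon_{F(M)}=\id_{F(M)}$ for every $M\in\Gr kQ$: this is immediate because $F(M)_{b'}=\id$, so by definition $(\epsilon_{F(M)})_z=F(M)_{b'}$ is the identity, all other components of $\epsilon_{F(M)}$ are identities, and the shifts at $z$ agree since $FG(F(M))_z=M_{s(b)}(-1)=F(M)_z$. The second reduces to $G(\epsilon_N)=\id_{G(N)}$ for every $N\in\Gr kQ'$: this holds because $G$ applied to a morphism records only its components at the vertices of $Q_0=Q'_0-\{z\}$, and at every such vertex $\epsilon_N$ is the identity by construction. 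An equivalent, more hands-on route is to write down the bijection $\Hom_{\Gr kQ'}(F(M),N)\to\Hom_{\Gr kQ}(M,G(N))$ sending $f\mapsto G(f)$ (using $GF=\id$), with inverse $g\mapsto\epsilon_N\circ F(g)$, and to check directly that these are mutually inverse and natural in $M$ and $N$; this uses nothing beyond $GF=\id$, naturality of $\epsilon$, and $\epsilon_{F(M)}=\id$.

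I do not anticipate a real obstacle: the argument is pure bookkeeping. The only point demanding care is the grading at the new vertex $z$---one must confirm that $(\epsilon_N)_z=N_{b'}$, viewed as a map $N_{s(b)}(-1)\to N_z$, is degree-preserving, which it is exactly because $N_{b'}$ has degree $1=\deg(b')$ as a map $N_{s(b)}\to N_z$, and similarly that the degree-$(\deg(b)-1)$ map attached to $b''$ is consistent with the shift by $(-1)$ at $z$. Conceptually the statement is transparent: $F$ formally inserts the vertex $z$ carrying a shifted copy of $M_{s(b)}$ attached by the identity map, $G$ contracts $z$ away, and the adjunction records precisely that a morphism out of the expanded representation is the same as a morphism out of the original after contracting.
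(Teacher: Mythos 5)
Your proposal is correct and follows essentially the same route as the paper: take the unit to be the identity (using $GF=\id_{\Gr kQ}$), take the counit to be the $\epsilon$ from the preceding proposition, and verify the two triangle identities by reducing them to $\epsilon_{F(M)}=\id_{F(M)}$ and $G(\epsilon_N)=\id_{G(N)}$. The one point you flag as needing care---that $(\epsilon_{F(M)})_z=F(M)_{b'}$ is the identity once regarded as a degree-zero map into $M_{s(b)}(-1)$---is exactly the observation the paper makes.
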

\begin{proof}
Let $\eta:\id_{\Gr kQ}\to GF$ be the identity transformation and $\epsilon:FG\to \id_{\Gr kQ'}$ the transformation constructed above. For $M\in \Gr kQ$,
$$
(\epsilon F\cdot F\eta)_M=\epsilon_{F(M)}\circ F(\eta_M)=\epsilon_{F(M)}\circ \id_{F(M)}=\epsilon_{F(M)}.
$$
If $v\in Q'_0-\{z\}$, then $(\epsilon_{F(M)})_v=\id_{M_v}$. The linear map $F(M)_{b'}:M_{s(b)}\to M_{s(b)}(-1)$ is the identity  considered a linear map of degree one. Hence $(\epsilon_{F(M)})_z=F(M)_{b'}:F(M)_{s(b)}(-1)\to F(M)_z=M_{s(b)}(-1)$ considered  a linear map of degree zero i.e. $(\epsilon_{F(M)})_z=\id_{M_{s(b)}(-1)}$. Therefore $(\epsilon F\cdot F\eta)_M=\id_{F(M)}$ which shows that $(\epsilon F\cdot F\eta)$ is the identity natural transformation $F\to F$. 

For $N\in \Gr kQ'$,
$$
(G\epsilon \cdot \eta G)_N=G(\epsilon_N)\circ \eta_{G(N)}=G(\epsilon_N).
$$
For any vertex $v\in Q_0=Q'_0-\{z\}$, $G(\epsilon_N)_v=(\epsilon_N)_v=\id_{N_v}$. Hence $(G\epsilon \cdot \eta G):G\to G$ is the identity natural transformation.

Therefore $(F,G)$ is an adjoint pair with $\eta$ and $\epsilon$ the unit and counit respectively.
\end{proof}

\subsection{The induced equivalence}

We will keep the same notation as before. Let $\pi^*:\Gr kQ'\to \QGr kQ'$ be the quotient functor with right adjoint $\pi_*$.

Let $\sigma:\id_{\Gr kQ'} \to \pi_*\pi^*$ be the unit and $\tau:\pi^*\pi_*\to \id_{\QGr kQ'}$ the counit of the adjoint pair $(\pi^*,\pi_*)$. By \cite[Prop. 4.3, pg. 176]{Pop} the counit $\tau$ is a natural isomorphism. Since $F$ is left adjoint to $G$ and $\pi^*$ is left adjoint to $\pi_*$, $\pi^*F$ is left adjoint to $G\pi_*$. Moreover, the unit and counit of this adjoint pair are given by
\begin{eqnarray} \label{eqn.adpair}
\text{unit }\quad G\sigma F\cdot \eta&:&\id_{\Gr kQ}\to G\pi_*\circ \pi^*F \\
\text{counit }\quad \tau \cdot \pi^*\epsilon \pi_*&:&\pi^*F\circ G\pi_* \to \id_{\QGr kQ'}. \nonumber
\end{eqnarray}
As $F$ and $\pi^*$ are exact, so is $\pi^*F:\Gr kQ \to \QGr kQ'$. 

\begin{Lem} \label{lem.kerisfdim}
Using the same notation as above,
$$
\Ker \pi^*F=\Fdim kQ.
$$
\end{Lem}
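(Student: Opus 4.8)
The plan is to unwind the definition of $\Ker\pi^*F$ and reduce to a statement about $\Fdim$. Since $\pi^*\colon\Gr kQ'\to\QGr kQ'$ is the quotient by the Serre subcategory $\Fdim kQ'$, we have $\pi^*F(M)=0$ if and only if $F(M)\in\Fdim kQ'$. So Lemma \ref{lem.kerisfdim} is equivalent to the assertion
$$
M\in\Fdim kQ \quad\Longleftrightarrow\quad F(M)\in\Fdim kQ'.
$$
I would prove both implications by transporting the description of objects of $\Fdim$ as directed unions of their finite-dimensional graded submodules along $F$ and along $G$, using the identity $GF=\id_{\Gr kQ}$ established above. (Note the adjunction is not actually needed here, only that $F$, $G$ are exact and $GF=\id$.)

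First I would record three elementary properties. (i) The finite-dimensional graded submodules of a graded module are closed under finite sums, so every object of $\Fdim A$ is the directed union — equivalently, the filtered colimit along inclusions — of its finite-dimensional graded submodules; conversely any such directed union lies in $\Fdim A$. (ii) Both $F$ and $G$ are exact (noted above) and preserve arbitrary direct sums, since they are defined vertex-by-vertex and the vertex spaces of $F(M)$, respectively $G(N)$, are drawn from $\{M_v\}\cup\{M_{s(b)}(-1)\}$, respectively $\{N_v\}$; an exact, coproduct-preserving functor between abelian categories is cocontinuous, so in particular $F$ and $G$ commute with directed unions of submodules. (iii) $F$ and $G$ preserve finite-dimensionality: $\dim_k F(M)=\dim_k M+\dim_k M_{s(b)}$ and $\dim_k G(N)=\dim_k N-\dim_k N_z$, the shift $(-1)$ being irrelevant to dimension.

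For the forward implication, write $M=\bigcup_\alpha M_\alpha$ as a directed union of finite-dimensional graded submodules; by (ii), $F(M)=\bigcup_\alpha F(M_\alpha)$ as a directed union of submodules, and by (iii) each $F(M_\alpha)$ is finite-dimensional, so $F(M)\in\Fdim kQ'$; this gives $\Fdim kQ\subseteq\Ker\pi^*F$. For the reverse implication, the same argument applied to $G$ shows $G(\Fdim kQ')\subseteq\Fdim kQ$, whence if $F(M)\in\Fdim kQ'$ then $M=GF(M)\in\Fdim kQ$, giving $\Ker\pi^*F\subseteq\Fdim kQ$.

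The argument is essentially formal, so I do not expect a serious obstacle; the one point deserving care is the claim in (ii) that $F$ and $G$ commute with directed unions of submodules, which I would justify via the "exact $+$ preserves coproducts $\Rightarrow$ cocontinuous" principle, both hypotheses being immediate from the levelwise definitions of $F$ and $G$. One should also confirm that the grading shift appearing in $F(M)_z=M_{s(b)}(-1)$ and in $F(M)_{b'}$, $F(M)_{b''}$ causes no trouble; it does not, since $(-1)$ is an autoequivalence of graded vector spaces preserving finite-dimensionality, but checking this is routine.
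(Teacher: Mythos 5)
Your proof is correct, and for the harder inclusion $\Ker \pi^*F\subseteq \Fdim kQ$ it takes a genuinely different route from the paper. The forward inclusion is essentially the paper's argument (the paper phrases it as ``$F$ is a left adjoint, hence preserves direct limits''; your exact-plus-coproducts justification amounts to the same thing, and is in fact immediate from the vertex-wise definition of $F$). For the reverse inclusion the paper does \emph{not} use $G$ at all: it introduces a multiplicative degree-zero map $f:kQ\to kQ'$ with $f(b)=b'b''$ and the injective vertex-wise identity map $g:M\to F(M)$, and then argues element by element that if $g(m)$ is killed by all paths of degree $\ge N$ in $Q'$, then $m$ is killed by all paths of degree $\ge N$ in $Q$, using $\deg f(q)=\deg q$. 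You instead observe that $G$ enjoys the same formal properties as $F$ (exact, computed vertex-wise, preserves finite-dimensionality and sums of submodules), so $G(\Fdim kQ')\subseteq \Fdim kQ$, and then invoke the identity $GF=\id_{\Gr kQ}$ already established in the paper to conclude $M=GF(M)\in\Fdim kQ$. Your argument is shorter and more structural, exploiting the symmetry between $F$ and $G$ and avoiding the auxiliary algebra homomorphism $f$ entirely; the paper's argument is more concrete and makes explicit why degree (rather than path length) is the right invariant to track when passing between $Q$ and $Q'$. The only points needing care in your version --- that $G$ preserves monomorphisms so that $G(N_\alpha)$ really is a submodule of $G(N)$, and that sums of submodules are computed vertex-wise --- are both immediate from the definitions, as you note.
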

\begin{proof}
If $M \in \Gr kQ$ is finite-dimensional then so is $F(M)$. Suppose $M \in \Fdim kQ$, i.e. $M$ is a direct limit of finite-dimensional modules. Since $F$ is a left adjoint it preserves direct limits, so $F(M)$ is a direct limit of finite-dimensional $kQ'$ modules, i.e.  $F(M)\in \Fdim kQ'$. Therefore, $\pi^*F(M)=0$ and we see $\Fdim kQ \subseteq \Ker \pi^*F$.

Suppose $M \in \Ker \pi^*F$, i.e. $F(M) \in \Fdim kQ'$. Define a degree zero $k$-linear map $f : kQ \to kQ'$ by 
\begin{itemize}
\item $f(v) = v$ for all $v \in Q_0$,
\item $f(a) = a$ for all $a \in Q_1 - \{b\}$, and
\item $f(b) = b'b''$ 
\end{itemize}
and extend multiplicatively.

Define a degree zero linear map $g : M \to F(M)$ by
$$
g \mid_{M_v} = \id : M_v \to F(M)_v = M_v
$$
for all $v \in Q_0$.

Pick $m \in M$. Since $F(M) \in \Fdim kQ'$, there exists an $N\in\NN$ such that $g(m).p = 0$ for all paths $p$ in $Q'$ such that $\deg(p) \geq N$. Let $q$ be a path in $Q$ such that $\deg(q) \geq N$. Since $\deg(f(q))= \deg(q)$,
$$
g(m.q) = g(m).f(q) = 0.
$$
Since $g$ is injective, $m.q = 0$, hence $M \in \Fdim kQ$ and $\Ker \pi^*F \subseteq \Fdim kQ$. The result follows.
\end{proof}

\begin{Prop} \label{prop.episomodtors}
The natural transformation $\epsilon:FG\to \id_{\Gr kQ'}$ is an isomorphism modulo torsion, i.e. $\pi^*(\epsilon_M)$ is an isomorphism for all $M \in \Gr kQ'$.
\end{Prop}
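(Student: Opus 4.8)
The plan is to analyze $\epsilon_M$ componentwise, show that its kernel and cokernel (computed in $\Gr kQ' \cong \GrRep kQ'$, hence vertexwise) lie in $\Fdim kQ'$, and then invoke exactness of $\pi^*$. By the construction of $\epsilon$, the component $(\epsilon_M)_v = \id_{M_v}$ for every $v \in Q'_0 - \{z\}$, while $(\epsilon_M)_z = M_{b'}$, viewed as a degree-zero map $FG(M)_z = M_{s(b)}(-1) \to M_z$. Hence $\Ker \epsilon_M$ and $\Coker \epsilon_M$ are the graded representations concentrated at the single vertex $z$, with $(\Ker\epsilon_M)_z = \ker M_{b'}$ and $(\Coker\epsilon_M)_z = M_z/\im M_{b'}$ (carrying the gradings inherited from $M_{s(b)}(-1)$ and $M_z$ respectively), and all other vertex components zero.

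Next I would pin down the module structure of these two representations. In $Q'$ the only arrow out of $z$ is $b''$, and on $FG(M)$ it acts by $FG(M)_{b''} = M_{b''}\circ M_{b'}$; this vanishes on $\ker M_{b'}$, so $\ker M_{b'}$ indeed spans a subrepresentation of $FG(M)$ and every arrow of $Q'$ acts as zero on $\Ker\epsilon_M$. Dually, the only arrow out of $z$, namely $b''$, carries $(\Coker\epsilon_M)_z = M_z/\im M_{b'}$ into $(\Coker\epsilon_M)_{t(b)} = \Coker(\id_{M_{t(b)}}) = 0$ (note $t(b)\in Q_0$, so $t(b)\neq z$), so every arrow also acts as zero on $\Coker\epsilon_M$. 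The only arrow with target $z$ is $b'$, whose source $s(b)\neq z$ contributes a zero space to both $\Ker\epsilon_M$ and $\Coker\epsilon_M$, so there is nothing further to check.

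Finally, any graded $kQ'$-module $X$ on which $(kQ')_{\ge 1}$ acts as zero is a sum of one-dimensional graded submodules---decompose each homogeneous element into its components at the various vertices; each such component $x$ satisfies $x\cdot kQ' = kx$---and therefore lies in $\Fdim kQ'$. Applying this to $\Ker\epsilon_M$ and $\Coker\epsilon_M$ gives $\pi^*(\Ker\epsilon_M) = \pi^*(\Coker\epsilon_M) = 0$. Since $\pi^*$ is exact it commutes with kernels and cokernels, so $\Ker\pi^*(\epsilon_M) \cong \pi^*(\Ker\epsilon_M) = 0$ and $\Coker\pi^*(\epsilon_M)\cong\pi^*(\Coker\epsilon_M) = 0$; a morphism in an abelian category with vanishing kernel and cokernel is an isomorphism, so $\pi^*(\epsilon_M)$ is an isomorphism for every $M\in\Gr kQ'$. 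I expect no real obstacle here; the only point needing a moment's care is verifying that $\Ker\epsilon_M$ and $\Coker\epsilon_M$ are honest sub/quotient representations supported at $z$ with trivial arrow action, and that reduces entirely to the identity $FG(M)_{b''} = M_{b''}\circ M_{b'}$ vanishing on $\ker M_{b'}$.
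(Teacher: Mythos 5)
Your proof is correct and follows essentially the same route as the paper's: identify the vertex components of $\epsilon_M$, observe that $\Ker\epsilon_M$ and $\Coker\epsilon_M$ are supported only at $z$ with every arrow acting trivially, conclude they are torsion, and apply the exact functor $\pi^*$ to the four-term exact sequence. The additional care you take in verifying the arrow actions and that modules killed by $(kQ')_{\ge 1}$ lie in $\Fdim kQ'$ is sound, though the paper treats those points as immediate.
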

\begin{proof}
Let $M\in \Gr kQ'$ and consider the exact sequence
\begin{equation}\label{es}
\xymatrix{ {0}\ar[r] & {\Ker \epsilon_M}\ar[r] & {FG(M)}\ar[r]^-{\epsilon_M} & {M}\ar[r] & {\Coker \epsilon_M}\ar[r] & {0}. }
\end{equation}
For each vertex $v\in Q'_0\-\{z\}$, $(\epsilon_M)_v=\id_{M_v}$ so  $(\Ker \epsilon_M)_v$ and $(\Coker \epsilon_M)_v$ are zero. Hence, the modules $\Ker \epsilon_M$ and $\Coker \epsilon_M$ are supported only at the vertex $z$ so every arrow in $Q'$ acts trivially on $\Ker \epsilon_M$ and $\Coker \epsilon_M$. Thus both $\Ker \epsilon_M$ and $\Coker \epsilon_M$ are torsion. Applying the exact functor $\pi^*$ to the exact sequence (\ref{es}) gives the exact sequence
$$
\xymatrix{ {0}\ar[r] & {0}\ar[r] & {\pi^*FG(M)}\ar[r]^-{\pi^*(\epsilon_M)} & {\pi^*M}\ar[r] & {0}\ar[r] & {0} },
$$ 
thereby showing $\pi^*(\epsilon_M)$ is an isomorphism.
\end{proof}

\begin{Thm} \label{thm.1stepequiv}
The adjoint pair of functors $(F,G):\Gr kQ\to \Gr kQ'$ induces an equivalence
$$
\QGr kQ \equiv \QGr kQ'.
$$
Moreover, this equivalence respects shifting.
\end{Thm}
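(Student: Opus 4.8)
The plan is to deduce the equivalence from the general machinery of quotient categories together with the three results just established: Lemma \ref{lem.kerisfdim}, Proposition \ref{prop.episomodtors}, and the adjoint pair $(\pi^*F, G\pi_*)$ with unit and counit given in (\ref{eqn.adpair}). First I would observe that since $\Ker \pi^*F = \Fdim kQ$ by Lemma \ref{lem.kerisfdim}, the exact functor $\pi^*F : \Gr kQ \to \QGr kQ'$ kills exactly the Serre subcategory $\Fdim kQ$, so by the universal property of the Serre quotient it factors as $\pi^*F = \bar F \circ \pi^*_{Q}$ for a unique exact functor $\bar F : \QGr kQ \to \QGr kQ'$, where $\pi^*_Q : \Gr kQ \to \QGr kQ$ is the quotient functor. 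The goal is then to show $\bar F$ is an equivalence and that it respects shifting.

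Next I would produce a candidate quasi-inverse. Since $G\pi_*$ is right adjoint to $\pi^*F$, and $\pi^*F$ factors through $\pi^*_Q$, one gets an induced functor $\bar G : \QGr kQ' \to \QGr kQ$ right adjoint to $\bar F$; concretely $\bar G = \pi^*_Q \circ G \circ \pi_*$. I would then check the two triangle-type statements. For one composite: $\bar G \bar F \cong \id_{\QGr kQ}$. This should follow from $G\circ F = \id_{\Gr kQ}$ (noted in the excerpt), because applying $\pi^*_Q$ and using that $\pi_*\pi^* F \cong$ the relevant lift — more carefully, the unit $G\sigma F \cdot \eta : \id_{\Gr kQ} \to G\pi_* \pi^* F$ descends to a natural transformation $\id_{\QGr kQ} \to \bar G \bar F$, and one must verify it is an isomorphism. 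For the other composite: $\bar F \bar G \cong \id_{\QGr kQ'}$. Here the counit $\tau \cdot \pi^*\epsilon\pi_* : \pi^*F \circ G\pi_* \to \id_{\QGr kQ'}$ descends to $\bar F \bar G \to \id_{\QGr kQ'}$; it is an isomorphism because $\tau$ is a natural isomorphism (\cite[Prop. 4.3]{Pop}) and $\pi^*(\epsilon_M)$ is an isomorphism for all $M$ by Proposition \ref{prop.episomodtors}, while $\pi^*\pi_*$ applied to such maps stays an isomorphism.

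The main obstacle, I expect, is the first composite: verifying that the descended unit $\id_{\QGr kQ} \to \bar G\bar F$ is a natural isomorphism. Although $GF = \id$ on the module level, after passing to quotient categories $\bar G\bar F = \pi^*_Q G \pi_* \pi^* F$, and the functor $\pi_*\pi^*$ is not the identity on $\Gr kQ'$ — it only becomes the identity after applying $\pi^*$ again. So one needs to track the unit $\sigma : \id \to \pi_*\pi^*$ carefully and use that $\pi^*_Q$ inverts the maps $\sigma$-related modifications coming from torsion. The clean way is probably to argue abstractly: an adjoint pair $(\pi^*F, G\pi_*)$ between Grothendieck categories, in which the left adjoint is exact with kernel a Serre subcategory and the counit is an isomorphism modulo that it factors through the quotient (which is exactly Proposition \ref{prop.episomodtors}), induces an equivalence on the quotient — this is a standard localization lemma (e.g. Gabriel–Popescu style), and one invokes it with the inputs already assembled. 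Finally, shifting: since $F$ respects shifting ($F(M(1)) \cong F(M)(1)$, noted in the excerpt) and $\pi^*$, $\pi_*$ commute with $(1)$ on both sides, the induced functors $\bar F, \bar G$ respect $(1)$ as well, and the equivalence respects shifting. I would close by remarking that iterating Theorem \ref{thm.1stepequiv} over the finitely many arrows of positive degree reduces $D(kQ)$ to $0$, giving $\QGr({\bf WPA}) \subset \QGr({\bf PA1})$, though that iteration is presumably carried out in the next subsection.
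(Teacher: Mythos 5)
Your proposal is correct and follows essentially the same route as the paper: the paper checks that the counit $\tau\cdot\pi^*\epsilon\pi_*$ of the adjunction $(\pi^*F, G\pi_*)$ is an isomorphism using Proposition \ref{prop.episomodtors}, so the right adjoint is fully faithful, and then invokes the standard localization result (\cite[Theorem 4.9, pg.~180]{Pop}) — precisely the ``Gabriel--Popescu style'' lemma you name — together with Lemma \ref{lem.kerisfdim} to identify the quotient. The worry you raise about verifying $\bar G\bar F\cong\id$ directly is exactly what that abstract lemma absorbs, so your fallback to it is the intended argument.
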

\begin{proof}
Let $\sM$ be an object in $\QGr kQ'$. By Proposition \ref{prop.episomodtors}, $\pi^*(\epsilon_{\pi_*(\sM)})$ is an isomorphism. Hence,
$$
(\tau \cdot \pi^*\epsilon \pi_*)_{\sM}=\tau_{\sM}\circ \pi^*(\epsilon_{\pi_*(\sM)})
$$
is an isomorphism since $\tau_{\sM}$ is an isomorphism. Therefore, $\pi^*F$ is an exact functor with a right adjoint for which the counit of the adjunction is an isomorphism, i.e. the right adjoint is fully faithful. By \cite[Theorem 4.9, pg. 180]{Pop}, $\pi^*F$ induces an equivalence
$$
\frac{\Gr kQ}{\Ker \pi^*F}\equiv \QGr kQ'.
$$
This induced equivalence respects shifting. Lemma \ref{lem.kerisfdim} shows $\Ker \pi^*F=\Fdim kQ$ which finishes the proof.
\end{proof}

\begin{Thm}[$\QGr({\bf WPA}) \, \subset  \, \QGr({\bf PA1})$]\label{thm.wpapa1}
Let $kQ$ be a weighted path algebra. There is a path algebra $k \ol{Q}$ generated in degree one and an equivalence $\QGr kQ \equiv \QGr k\ol{Q}$ which respects shifting.  
\end{Thm}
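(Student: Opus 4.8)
The plan is to prove the theorem by induction on the weight discrepancy $D(kQ)$, using Theorem \ref{thm.1stepequiv} as the inductive step. The single-arrow subdivision of section \ref{subsect.3.1} strictly decreases $D$, so finitely many applications of it bring us to a quiver all of whose arrows have degree $1$.

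For the base case, suppose $D(kQ) = 0$. By the Remark following the definition of $D(kQ)$, this forces every arrow of $Q$ to have degree $1$, so $kQ$ is already a path algebra generated in degree one; we take $\ol{Q} = Q$ and the identity functor, which trivially respects shifting.

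For the inductive step, suppose $D(kQ) = d > 0$ and that the theorem holds for all weighted path algebras with weight discrepancy $< d$. Since $D(kQ) > 0$, there is at least one arrow $b \in Q_1$ with $\deg(b) > 1$. Form the quiver $Q'$ as in section \ref{subsect.3.1} by subdividing $b$ into $b'$ of degree $1$ and $b''$ of degree $\deg(b)-1$; as observed there, $D(kQ') = D(kQ) - 1 = d - 1$. By Theorem \ref{thm.1stepequiv}, the adjoint pair $(F,G)$ induces an equivalence $\QGr kQ \equiv \QGr kQ'$ that respects shifting. Applying the inductive hypothesis to $kQ'$, there is a path algebra $k\ol{Q}$ generated in degree one and an equivalence $\QGr kQ' \equiv \QGr k\ol{Q}$ respecting shifting. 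Composing the two equivalences yields an equivalence $\QGr kQ \equiv \QGr k\ol{Q}$; a composite of two functors each commuting with $(1)$ up to natural isomorphism again commutes with $(1)$ up to natural isomorphism, so the composite respects shifting. This completes the induction.

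There is essentially no obstacle left: all the substance resides in Theorem \ref{thm.1stepequiv}, and the only thing requiring attention is termination of the process, which is guaranteed because $D(kQ)$ is a nonnegative integer strictly decreased by each subdivision and because the subdivision of section \ref{subsect.3.1} applies to any quiver possessing an arrow of degree $> 1$. One can also present the argument non-recursively: perform subdivisions in succession, producing a chain $Q = Q^{(0)}, Q^{(1)}, \ldots, Q^{(d)} = \ol{Q}$ with $D(kQ^{(i)}) = d - i$, and compose the $d$ shift-respecting equivalences supplied by Theorem \ref{thm.1stepequiv}.
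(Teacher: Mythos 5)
Your proof is correct and takes essentially the same approach as the paper: induction on the weight discrepancy $D(kQ)$, with the base case $D(kQ)=0$ being trivial and the inductive step supplied by the single-arrow subdivision of section \ref{subsect.3.1} together with Theorem \ref{thm.1stepequiv}. Your added remarks on termination and on composing shift-respecting equivalences are fine but only make explicit what the paper leaves implicit.
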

\begin{proof}
We induct on $D(kQ)$. If $D(kQ)=0$, that is, all arrows in $kQ$ have degree one, there is nothing to prove. 

Suppose $D(kQ) > 0$. Let $Q'$ be a quiver obtained by replacing an arrow of degree greater than one in the manner of section \ref{sect.3}. Since $D(kQ') = D(kQ) - 1$, by induction there is a quiver $\ol{Q}$ with all arrows in degree one and an equivalence $\QGr kQ'\equiv \QGr k\ol{Q}$ that respects shifting. Hence, there is an equivalence $\QGr kQ\equiv \QGr k\ol{Q}$ which respects shifting by Theorem \ref{thm.1stepequiv}.
\end{proof}

\begin{example}
Let $F=k\<x_1,x_2,x_3\>$ with $\deg x_i=i$. For $Q$ the quiver
$$
\UseComputerModernTips
\xymatrix{ {\bullet} \ar@(u,r)^{x_1} \ar@(u,l)_{x_2} \ar@(dl,dr)_{x_3} }
$$
with $\deg x_i = i$, $F=kQ$. 
Successively replacing an arrow of degree greater than one gives the quiver $\ol{Q}$
$$
\UseComputerModernTips
\xymatrix{ {\bullet}\ar@/_1pc/[dr] & {} & {} \\
           {} & {\bullet}\ar@/_1pc/[ul] \ar@(u,r) \ar@<1ex>@/_1pc/[dl] & {} \\
           {\bullet}\ar@/_1pc/[rr] & {} & {\bullet}\ar@<1ex>@/_1pc/[ul] \\
           {} & {} & {} }
$$
with all arrows in degree one. By Theorem \ref{thm.wpapa1} there is an equivalence $\QGr F \equiv \QGr k\ol{Q}$. This example illustrates an answer to Question (\ref{qu1}).
\end{example}

\section{Proof that $\QGr(\bf{CMA})\, \subset \, \QGr({\bf WPA})$}

\subsection{The Ufnarovski graph of a monomial algebra} 

In \cite{U1}, V. Ufnarovskii associates to any connected monomial algebra $A$ a graph which has the same growth.
 
Let $A=k\<G\>/(F)$ where $G$ is a finite set of letters and $F$ is a finite set of words in those letters. Every connected monomial algebra can be written in such a way. Following \cite{HS0} words in $F$ are said to be {\it forbidden} while words in $(F)$ are called {\it illegal}. Words not in $(F)$ are called {\it legal}. The set of legal words is denoted $L$.

The length (not to be confused with degree) of a word $w$ is the number of letters in it and is denoted $|w|$. We write $L_n$ for the set of legal words of length $n$. Let $\ell+1$ be the maximum length of a forbidden word:
$$
\ell+1:=\max\{|w|\;|\; w\in F\}.
$$
The Ufnarovskii graph of $A$ is denoted $Q(A)$, or just $Q$ if $A$ is clear from context, and is defined as follows:
\begin{eqnarray*}
Q(A)_0&:=& L_{\ell} \\
Q(A)_1&:=& L_{\ell+1} \\
s(w)&:=& \text{ the unique word in $L_{\ell}$ such that }w\in s(w)G \\
t(w)&:=& \text{ the unique word in $L_m$ such that }w\in Gt(w). 
\end{eqnarray*} 
To elaborate on the last two lines in the definition of $Q(A)$, given a legal word $w$ of length $\ell+1$, there are unique words $v,u\in Q(A)_0=L_\ell$ and unique letters $x,y\in G$ such that
$$
w=vy=xu.
$$
Hence $s(w)=v$ and $t(w)=u$. When a word $w\in L_{\ell+1}$ is treated as an arrow, we will often write $\vec{w}$.

\begin{example} \label{ex.ufgraph}
Let 
$$
A=\frac{k\<x,y,z\>}{(x^2,yx,zy,xz,z^2,y^4)}.
$$
Here, $\ell+1=4$. The legal words of length $3$ and $4$ are
\begin{eqnarray*}
Q(A)_0=L_3&=&\{xy^2,xyz,y^2z,yzx,zxy,y^3\} \\
Q(A)_1=L_4&=&\{xy^2z,xyzx,y^2zx,yzxy,zxy^2,zxyz,y^3z,xy^3\}.
\end{eqnarray*}
Thus, the Ufnarovskii graph is
$$
\UseComputerModernTips
\xymatrix{ {} & {zxy}\ar[rr]^{\vec{zxyy}} \ar[dl]_{\vec{zxyz}} & {} & {xy^2}\ar[dd]_{\vec{xyyz}} \ar[dr]^{\vec{xyyy}} & {} \\
           {xyz}\ar[dr]_{\vec{xyzx}} & {} & {} & {} & {y^3}\ar[dl]^{\vec{yyyz}} \\
           {} & {yzx}\ar[uu]_{\vec{yzxy}} & {} & {y^2z}\ar[ll]^{\vec{yyzx}} & {} }
$$
\end{example}

\subsection{Labeling the arrows.}

We label arrows in $Q(A)$ by elements in $G$. The label attached to an arrow $\vec{w}$ is the first letter of $w$. For example, the label attached to $\vec{zxyy}$ is $z$. We extend this labeling to paths. The label for $\vec{w_1}\cdots \vec{w_k}$ is $x_1\cdots x_k$ where $x_i$ is the label of $\vec{w_i}$. For example, the label attached to 
$$
(\vec{zxyy})(\vec{xyyy})(\vec{yyyz})(\vec{yyzx})(\vec{yzxy})(\vec{zxyz})
$$
is $zxyyyz$.

\begin{example}
The labeling for the Ufnarovskii graph in example \ref{ex.ufgraph} is 
$$
\UseComputerModernTips
\xymatrix{ {} & {zxy}\ar[rr]^{z} \ar[dl]_{z} & {} & {xy^2}\ar[dd]_{x} \ar[dr]^{x} & {} \\
           {xyz}\ar[dr]_{x} & {} & {} & {} & {y^3}\ar[dl]^{y} \\
           {} & {yzx}\ar[uu]_{y} & {} & {y^2z}\ar[ll]^{y} & {.} }
$$
\end{example}

Suppose $A$ and $kQ(A)$ are graded by declaring $\deg(G)=1$ and $\deg(Q(A)_1)=1$. It is shown in \cite{HS0} that there is a graded algebra homomorphism $f:A\to kQ(A)$ defined by
\begin{equation} \label{eqn.f}
f(x)=\sum{\vec{w}}
\end{equation}
for $x\in G$ where the sum is over all arrows $\vec{w}$ labeled $x$. If there are no arrows labeled $x$, then $f(x)=0$. This is used to prove the following theorem in \cite{HS0, HS1}.

\begin{Thm} \label{thm.HS0main}
Let $A$ be a monomial algebra generated in degree one, $Q(A)$ its Ufnarovskii graph and $f:A\to kQ(A)$ the morphism discussed above. Then $-\otimes_{A} kQ(A)$ induces an equivalence of categories
$$
\QGr A \equiv \QGr kQ(A).
$$ 
\end{Thm}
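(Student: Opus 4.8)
\textbf{Proof proposal for Theorem \ref{thm.HS0main}.} The plan is to mimic the strategy used in Section \ref{sect.3}: produce a graded algebra homomorphism whose kernel and cokernel are locally finite-dimensional (so that they die in the quotient categories), and then invoke a general localization result to conclude that the induced functor on $\QGr$ is an equivalence. Concretely, I would work with the homomorphism $f:A\to kQ(A)$ from \eqref{eqn.f} and show that $-\otimes_A kQ(A):\Gr A\to \Gr kQ(A)$ induces the desired equivalence. The key point is a combinatorial analysis of the labeling map: the monomials of $A$ of length $n$ (the legal words $L_n$) correspond, for $n\ge\ell$, to the paths of length $n-\ell$ in $Q(A)$ via the ``sliding window'' of width $\ell$, and the labeling sends such a path back to the word it came from. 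This is exactly the classical fact underlying Ufnarovskii's theorem that $A$ and $kQ(A)$ have the same growth; here it gets upgraded to a statement about modules.

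The steps, in order, would be: (1) Verify that $f$ is a well-defined graded algebra homomorphism — i.e. that $f(uv)=f(u)f(v)$ for forbidden relations $uv$, which amounts to checking that if $w$ is an illegal word of length $\ell+1$ then no arrow of $Q(A)$ is labeled by $w$, and that concatenation of overlapping legal windows corresponds to composition of arrows. (2) Compute $\ker f$ and $\mathrm{coker}\, f$ as graded vector spaces. The map $f_n:A_n\to (kQ(A))_n$ is, after the identification above, essentially the identity on legal words of length $\ge\ell+1$ and an inclusion/quotient phenomenon only in low degrees; so both $\ker f$ and $\mathrm{coker}\, f$ are concentrated in finitely many degrees, hence finite-dimensional, hence in $\Fdim$ on both sides. (3) Since $kQ(A)$ is flat as a left $A$-module (being, up to the finite-dimensional discrepancy, a free-ish object over the legal-words basis — or one argues more carefully via the representation picture), the functor $-\otimes_A kQ(A)$ is exact, respects shifting, and $\ker(\pi^*(-\otimes_A kQ(A)))=\Fdim A$ by an argument identical to Lemma \ref{lem.kerisfdim}: a module $M$ with $M\otimes_A kQ(A)\in\Fdim kQ(A)$ must itself be in $\Fdim A$ because $f$ preserves degrees and the unit $M\to M\otimes_A kQ(A)$ is injective modulo finite-dimensional stuff. (4) Show the right adjoint (restriction along $f$, composed with $\pi_*$) has invertible counit modulo torsion, again as in Proposition \ref{prop.episomodtors}, using that the cokernel of $\epsilon$ is supported on the finitely many ``boundary'' degrees. (5) Conclude via \cite[Theorem 4.9, pg. 180]{Pop}, exactly as in Theorem \ref{thm.1stepequiv}.

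The main obstacle I anticipate is Step (2)–(3): unlike the clean one-arrow-subdivision of Section \ref{sect.3}, the homomorphism $f$ here is genuinely many-to-one on generators (a letter $x$ maps to a \emph{sum} of arrows), so $kQ(A)$ is not obviously free or even flat over $A$, and one cannot just transport kernels and cokernels vertexwise. The right way around this is probably to avoid flatness of the tensor functor altogether and instead work on the representation side: view a graded $A$-module as the data of one graded vector space with $|G|$ operators, view a graded $kQ(A)$-representation as graded vector spaces indexed by $L_\ell$ with the arrow maps, and describe $-\otimes_A kQ(A)$ and its adjoint explicitly in those terms, where exactness can be checked by hand. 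The bookkeeping — matching up the grading shifts coming from the window width $\ell$, and confirming that all discrepancies between $\dim A_n$ and $\dim (kQ(A))_n$ occur only for $n\le\ell$ — is the part that needs genuine care, but it is the same bookkeeping that makes Ufnarovskii's growth theorem work, so it should go through.

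Since the theorem is quoted from \cite{HS0, HS1} rather than proved here, an alternative (and shorter) route for the paper would be simply to cite those references for the statement and reserve the detailed argument above for the genuinely new Theorem \ref{thm.main2'}; but the strategy I would use to reprove it from scratch is the one just outlined.
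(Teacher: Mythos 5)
First, note that the paper does not actually prove Theorem \ref{thm.HS0main}: it is quoted from \cite{HS0,HS1}, so the citation route you mention at the end is precisely what the paper does. Your from-scratch outline follows the same architecture the paper uses in Section 4 for the weighted generalization --- a graded homomorphism $f$ with torsion kernel and cokernel, \cite[Prop.~2.5]{AZ}, and then an identification of the resulting localizing subcategory of $\Gr kQ(A)$ with $\Fdim kQ(A)$ via Lemma \ref{lem.gradedgen}. Your flatness worry in step (3) is a non-issue: \cite[Prop.~2.5]{AZ} requires only that $\Ker f$ and $\Coker f$ be torsion, not that $kQ(A)$ be flat over $A$, and the conclusion one gets is $\QGr A\equiv \Gr kQ(A)/T_A$ as in Proposition \ref{prop.main2}; the remaining step $T_A=\Fdim kQ(A)$ is exactly what Lemma \ref{lem.gradedgen} is for.

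The genuine gap is in your step (2). It is false in general that $\Ker f$ and $\Coker f$ are concentrated in finitely many degrees, or even finite-dimensional. The sliding-window bijection identifies $L_n$ with paths of length $n-\ell$, so $\dim_k A_n=|L_n|$ while $\dim_k kQ(A)_n=|L_{n+\ell}|$; since $f$ preserves degree, $f_n:A_n\to kQ(A)_n$ compares spaces of different dimensions in \emph{every} degree, not just low ones. Concretely, for $A=k\langle x,y\rangle/(yx,y^2)$ no arrow of $Q(A)$ is labeled $y$, so $f(x^ny)=0$ for all $n$ and $\Ker f$ is infinite-dimensional; for $A=k\langle x,y\rangle/(x^2)$ one has $\dim_k kQ(A)_n=|L_{n+1}|>|L_n|\geq \dim_k f(A_n)$, so $\Coker f$ is nonzero in every degree. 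The correct --- and much weaker --- statement is only that every element of $\Ker f$ and $\Coker f$ generates a finite-dimensional submodule, i.e.\ that both lie in $\Fdim A$. Proving that is the real combinatorial content of \cite{HS0}, and it was delicate enough to require the corrigendum \cite{HS1}; your sketch substitutes for it a dimension count that does not hold. With that statement supplied, your steps (3)--(5) go through essentially as in Proposition \ref{prop.main2} and Theorem \ref{thm.main2'}.
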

 
\subsection{The weighted Ufnarovskii graph of a weighted monomial algebra.}

Suppose $A=k\<G\>/(F)$ is a connected monomial algebra. For any arrow $\vec{w}$ in $Q(A)$ labeled $x$, define $\deg \vec{w}=\deg x$. We call $Q(A)$ with this new grading the {\it weighted Ufnarovskii graph} associated to $A$. The morphism $f:A\to kQ(A)$ defined in equation (\ref{eqn.f}) sends $x\in G$ to the sum of all arrows labeled $x$ in $kQ(A)$. Hence, $f$ is a morphism of graded algebras.  

Let $A$ be a connected monomial algebra with $Q(A)$ its Ufnarovskii graph. Whatever grading we give to $A$ as long as we give $kQ(A)$ the grading by declaring $\deg(\vec{w})=\deg(x)$ where $x$ is the label of $\vec{w}$, $\Ker f$ and $\Coker f$ will be graded modules. 

If we grade $A$ and $kQ(A)$ by putting all generators in degree one, it is shown in \cite{HS0} that $\Ker f$ and $\Coker f$ are in $\Fdim A$. Therefore, every element in either of these modules generates a finite-dimensional (ungraded) submodule. 

If we now give $A$ a grading with some of the generators in degrees greater than one and regrade $kQ(A)$ accordingly, $\Ker f$ and $\Coker f$ are the same modules as before except with different gradings. As every element in either $\Ker f$ or $\Coker f$ generates a finite-dimensional submodule, $\Ker f$ and $\Coker f$ will be in $\Fdim A$ with this new grading. Using Proposition 2.5 in \cite{AZ} we get the following proposition.

\begin{proposition} \label{prop.main2}
Let $A$ be a connected monomial algebra with $Q(A)$ its weighted Ufnarovskii graph. Then $-\otimes_AkQ(A)$ induces an equivalence of categories
$$
\QGr A \equiv \frac{\Gr kQ(A)}{T_A}
$$
where $T_A$ is the localizing sub-category of $\Gr kQ(A)$ consisting of all modules whose restriction to $A$ is in $\Fdim A$.
\end{proposition}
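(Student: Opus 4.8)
The plan is to deduce this from \cite[Prop.~2.5]{AZ} applied to the graded algebra homomorphism $f:A\to kQ(A)$ of \eqref{eqn.f}, using the fact (established in the discussion preceding the statement) that $\Ker f$ and $\Coker f$ lie in $\Fdim A$ for the weighted gradings. Two ingredients go into this: the grading-independence of $\Fdim$, which reduces the membership $\Ker f,\Coker f\in\Fdim A$ to the degree-one case already proved in \cite{HS0}; and \cite[Prop.~2.5]{AZ}, which converts ``$f$ has torsion kernel and cokernel'' into the desired category equivalence.

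For the first ingredient, I would observe that for a graded right $A$-module $M$, membership in $\Fdim A$ depends only on the underlying ungraded $A$-module: indeed $M\in\Fdim A$ if and only if $M$ is locally finite, i.e.\ every element generates a finite-dimensional submodule. If $M$ is locally finite then for each homogeneous $m$ the submodule $mA$ is finite-dimensional, and it is automatically graded because $m$ is homogeneous; hence $M$ is the sum of its finite-dimensional graded submodules. The converse is clear. Since $f$ is graded for the weighted gradings, $\Ker f$ and $\Coker f$ are graded $A$-modules whose underlying ungraded modules coincide with those appearing when all generators have degree one, where \cite{HS0} shows them to be locally finite; so $\Ker f,\Coker f\in\Fdim A$ in the weighted setting as well.

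For the second ingredient, note that the relevant adjoint pair is $-\otimes_A kQ(A):\Gr A\to \Gr kQ(A)$ together with restriction of scalars along $f$, and that $T_A$ is by definition the preimage of $\Fdim A$ under this restriction functor. Since restriction along $f$ is exact and commutes with arbitrary direct sums, $T_A$ inherits from $\Fdim A$ closure under subobjects, quotients, extensions, and direct sums, so it is a localizing subcategory of $\Gr kQ(A)$. Applying $M\otimes_A-$ to the exact sequence $A\xrightarrow{f}kQ(A)\to\Coker f\to 0$ of $A$-bimodules presents $M\otimes_A kQ(A)$, with its $A$-action restricted along $f$, as an extension of $M\otimes_A\Coker f$ by a quotient of $M$; when $M\in\Fdim A$ both of these are locally finite (the first because $\Coker f\in\Fdim A$), so $M\otimes_A kQ(A)\in T_A$. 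Thus $-\otimes_A kQ(A)$ sends $\Fdim A$ into $T_A$, and \cite[Prop.~2.5]{AZ} gives the induced equivalence $\QGr A\equiv\Gr kQ(A)/T_A$.

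I do not expect a genuine obstacle: the construction of $f$ and the local finiteness of $\Ker f,\Coker f$ in the unweighted case are imported from \cite{HS0}, and the passage to the quotient-category equivalence is \cite[Prop.~2.5]{AZ}. The only point that needs care is the observation that $\Fdim$ is detected on underlying ungraded modules, which is exactly what allows the weighted case to follow formally from the degree-one case.
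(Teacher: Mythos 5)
Your proposal is correct and follows essentially the same route as the paper: the paper likewise observes that membership in $\Fdim A$ is detected on the underlying ungraded module (every element generates a finite-dimensional submodule), imports from \cite{HS0} that $\Ker f$ and $\Coker f$ are torsion in the degree-one grading, concludes the same for the weighted grading, and then invokes \cite[Prop.~2.5]{AZ}. Your additional verifications that $T_A$ is localizing and that $-\otimes_A kQ(A)$ carries $\Fdim A$ into $T_A$ are left implicit in the paper but are consistent with its argument.
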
  

Let $p$ be a path in $Q(A)$ with labeling $x_{j_1}\ldots x_{j_r}$, say
$$
\UseComputerModernTips
\xymatrix{ {v_0}\ar[r]^{x_{j_1}} & {\cdots} \ar[r]^{x_{j_r}} & {v_r}, }
$$
and write $v_r=x_{j_{r+1}}\ldots x_{j_{r+\ell}}$. By \cite[Lemma 3.1]{HS0}, $v_{i-1}=x_{j_i}\ldots x_{j_{i+\ell-1}}$. Hence, the path $p$ is completely determined by its labeling and its ending vertex. In other words, different paths with the same labeling end at different vertices.

\begin{lemma} \label{lem.gradedgen}
Let $A$ be a connected monomial algebra with $Q(A)$ its weighted Ufnarovskii graph. For all $n\geq 0$,
$$
kQ(A)_n=f(A_n)kQ_0.
$$
\end{lemma}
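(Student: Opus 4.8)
The plan is to show the two inclusions $kQ(A)_n \supseteq f(A_n)kQ_0$ and $kQ(A)_n \subseteq f(A_n)kQ_0$ separately, with the second being the substantive one. The first is immediate: $f$ is a graded algebra homomorphism, so $f(A_n) \subseteq kQ(A)_n$, and $kQ_0$ is spanned by the vertex idempotents, so $f(A_n)kQ_0 \subseteq kQ(A)_n kQ_0 = kQ(A)_n$. For the reverse inclusion, a $k$-basis of $kQ(A)_n$ is given by the paths $p$ in $Q(A)$ of degree $n$ (degree being the sum of the degrees of the arrows traversed, equivalently the degree of the labeling word). So it suffices to show that every such path $p$ lies in $f(A_n)kQ_0$. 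Given $p$ with labeling word $m = x_{j_1}\cdots x_{j_r} \in A_n$ and ending vertex $v_r$, I would compute $f(m) \cdot v_r = f(x_{j_1})\cdots f(x_{j_r})\, v_r$ by expanding each $f(x_{j_i})$ as the sum of all arrows labeled $x_{j_i}$.

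The key point is the paragraph immediately preceding the lemma: a path in $Q(A)$ is completely determined by its labeling together with its ending vertex, since $v_{i-1} = x_{j_i}\cdots x_{j_{i+\ell-1}}$ is forced. So when we expand the product $f(x_{j_1})\cdots f(x_{j_r})$ as a sum of paths (composable products of arrows) and then right-multiply by the idempotent $v_r$, the only surviving term is a path that ends at $v_r$ and has labeling $x_{j_1}\cdots x_{j_r}$ — and there is exactly one such path, namely $p$ itself. Every other term in the expansion either fails to be a composable path (the arrows don't match up), or is a composable path ending at some vertex other than $v_r$ and hence is killed by multiplication by $v_r$. Therefore $f(m)v_r = p$, exhibiting $p \in f(A_n)kQ_0$. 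Running over a basis of $A_n$ (the legal words of degree $n$) and all vertices $v \in Q_0$ gives all paths of degree $n$, hence all of $kQ(A)_n$.

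The main obstacle — really the only place care is needed — is verifying the claim that exactly one term survives: one must check that if $p'$ is any path ending at $v_r$ with the same labeling as $p$, then $p' = p$ (this is the cited consequence of \cite[Lemma 3.1]{HS0}), and conversely that $p$ does appear, i.e. that the unique arrow labeled $x_{j_r}$ ending at $v_r$, then the unique one labeled $x_{j_{r-1}}$ ending at its source, and so on, assemble into a genuine path whose source is $v_0$ and which is composable at every stage. This is exactly the content of the displayed recursion $v_{i-1} = x_{j_i}\cdots x_{j_{i+\ell-1}}$ together with the definitions of $s$ and $t$ on $Q(A)_1$. A minor point to state explicitly is that $f(x_{j_i}) \neq 0$ for each letter appearing in a legal word of length $\ge \ell+1$, equivalently that such a letter does label some arrow; since $m$ is legal of degree $n$ and is a subword bookkeeping into the path $p$ which exists by hypothesis, this is automatic. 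No induction is needed; the statement for each $n$ follows directly once the single-surviving-term computation is in hand.
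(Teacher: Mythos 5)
Your proposal is correct and follows essentially the same route as the paper: expand $f(x_{j_1}\cdots x_{j_r})$ as the sum of all paths with that labeling, right-multiply by the target idempotent, and use the fact (from the discussion preceding the lemma, via \cite[Lemma 3.1]{HS0}) that a path is determined by its labeling and its ending vertex, so exactly the term $p$ survives. The only difference is that you spell out the easy inclusion and the legality of the labeling word, which the paper leaves implicit.
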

\begin{proof}
Let $p$ be a path of degree $n$ with label $x_{j_1}\cdots x_{j_r}$ and let $v$ be its target. By the previous discussion, $p$ is the only path with this label which ends at $v$. As
$$
f(x_{j_1}\cdots x_{j_r})=\sum{q}
$$  
where the sum is over all paths labeled $x_{j_1}\ldots x_{j_r}$,  
$$
f(x_{j_1}\ldots x_{j_r})e_v=p
$$
which shows $p\in f(A_n)kQ(A)_0$. As all paths of degree $n$ form a basis for $kQ(A)_n$ the lemma follows.
\end{proof}

\begin{Thm}[$\QGr({\bf CMA})\, \subset \, \QGr({\bf WPA})$] \label{thm.main2'}
Let $A$ be a connected monomial algebra with $Q(A)$ its weighted Ufnarovskii graph. Then $-\otimes_AkQ(A)$ induces an equivalence
$$
F:\QGr A \equiv \QGr kQ(A).
$$
Moreover, $F(M(1))\cong F(M)(1)$ for all $M\in \QGr A$.
\end{Thm}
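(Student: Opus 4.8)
The plan is to deduce the theorem from Proposition~\ref{prop.main2}. That proposition already provides, via $-\otimes_A kQ(A)$, an equivalence $\QGr A \equiv \Gr kQ(A)/T_A$ that respects shifting, where $T_A$ is the localizing subcategory of $\Gr kQ(A)$ consisting of those modules whose restriction to $A$ along $f$ lies in $\Fdim A$. Since $\QGr kQ(A) = \Gr kQ(A)/\Fdim kQ(A)$ by definition, everything comes down to the single identity
$$
T_A = \Fdim kQ(A),
$$
and the rest of the argument is bookkeeping about gradings.

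So first I would prove $T_A = \Fdim kQ(A)$ by establishing the two inclusions. The inclusion $\Fdim kQ(A) \subseteq T_A$ is immediate: a finite-dimensional graded $kQ(A)$-submodule of $N$ is, after restriction along the graded map $f$, a finite-dimensional graded $A$-submodule of $N|_A$, so if $N$ is the sum of its finite-dimensional $kQ(A)$-submodules then $N|_A$ is the sum of finite-dimensional $A$-submodules, i.e. $N|_A \in \Fdim A$. For the reverse inclusion, let $N \in T_A$ and let $n \in N$ be homogeneous of degree $d$. Because $N|_A \in \Fdim A$, the cyclic $A$-submodule $n\cdot A$ is finite-dimensional and graded, hence concentrated in finitely many degrees, so there is an integer $M$ with $n\cdot A_m = 0$, equivalently $n\cdot f(A_m) = 0$, for all $m$ with $d+m \ge M$. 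By Lemma~\ref{lem.gradedgen}, $kQ(A)_m = f(A_m)\,kQ(A)_0$, so $n\cdot kQ(A)_m = \bigl(n\cdot f(A_m)\bigr)kQ(A)_0 = 0$ for all such $m$. Therefore the cyclic $kQ(A)$-submodule generated by $n$ is the finite sum $\sum_{d+m < M} n\cdot kQ(A)_m$; since $Q(A)$ is a finite quiver and every arrow has degree $\ge 1$, a path of degree $m$ has length $\le m$, so each $kQ(A)_m$ is finite-dimensional, and hence so is this submodule. As $N$ is the sum of the cyclic submodules generated by its homogeneous elements, $N \in \Fdim kQ(A)$, proving $T_A \subseteq \Fdim kQ(A)$.

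With $T_A = \Fdim kQ(A)$ in hand, Proposition~\ref{prop.main2} yields the equivalence $F : \QGr A \equiv \QGr kQ(A)$ induced by $-\otimes_A kQ(A)$. Shift-compatibility is inherited from the module level: $M(1)\otimes_A kQ(A) \cong \bigl(M\otimes_A kQ(A)\bigr)(1)$ naturally, and both $\Fdim A$ and $\Fdim kQ(A)$ are stable under shifting, so $F(M(1)) \cong F(M)(1)$ for all $M \in \QGr A$. I do not expect a serious obstacle here: the essential combinatorial input is Lemma~\ref{lem.gradedgen}, which is already proved, and the only point demanding any care is the grading argument in the second inclusion—using that a finite-dimensional graded module occupies finitely many degrees, and that a finite quiver with positive-degree arrows has finite-dimensional graded components.
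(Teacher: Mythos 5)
Your proposal is correct and follows essentially the same route as the paper: both reduce to showing $T_A = \Fdim kQ(A)$ via Proposition~\ref{prop.main2}, with Lemma~\ref{lem.gradedgen} supplying the key equivalence $n\cdot A_m = 0 \iff n\cdot kQ(A)_m = 0$. You merely spell out in more detail the finite-dimensionality bookkeeping that the paper leaves implicit.
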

\begin{proof}
By Proposition \ref{prop.main2} we just need to show $T_A=\Fdim kQ(A)$. 

Let $M$ be a $kQ(A)$-module also considered an $A$-module via $f$. Let $m\in M$. By Lemma \ref{lem.gradedgen}, $mA_n=0$ if and only if $mkQ(A)_n=0$. Hence, $M\in \Fdim kQ(A)$ if and only if $M\in \Fdim A$. Therefore, $T_A=\Fdim kQ(A)$.
\end{proof}

\begin{example}
Let $A$ be the connected monomial algebra
$$
A=\frac{k\<x,y\>}{(yx,x^3)}
$$
where $\deg(x)=1$ and $\deg(y)=2$. The sets of legal words of length $2$ and $3$ are
\begin{eqnarray*}
Q(A)_0=L_2&=&\{x^2,xy,y^2\} \\
Q(A)_1=L_3&=&\{x^2y, y^3,xy^2\}.
\end{eqnarray*}
Hence, the weighted Ufnarovskii graph $Q(A)$ is given by
$$
\UseComputerModernTips
\xymatrix{ {x^2}\ar[r]^{\vec{xxy}} & {xy}\ar[r]^{\vec{xyy}} & {y^2}\ar@(ur,dr)^{\vec{yyy}} }
$$ 
with $\deg(\vec{xxy})=\deg(\vec{xyy})=1$ and $\deg(\vec{yyy})=2$.

Replacing the degree $2$ arrow with two degree one arrows yields the quiver $Q'$
$$
\UseComputerModernTips
\xymatrix{ {x^2}\ar[r] & {xy}\ar[r] & {y^2}\ar@/^1pc/[r] & {z}\ar@/^1pc/[l] }
$$
with all arrows in degree one. The monomial subalgebra $k+kQ'_{\geq 1}$ can be presented as $B=k\<x_1,x_2,x_3,x_4\>/I$ where 
$$
I=(x_1^2, x_1x_3, x_1x_4, x_2x_1, x_2^2, x_2x_4, x_3x_1, x_3x_2, x_3^2, x_4x_1, x_4x_2, x_4^2).
$$
By Theorem \ref{thm.main2'} and Theorem \ref{thm.wpapa1} and \cite[Prop 2.5]{AZ}, all the categories $\QGr A$, $\QGr kQ(A)$, $\QGr kQ'$ and $\QGr B$ are equivalent.
\end{example}


\end{document}